\selectfont\symbol{60}\fontencoding{\encodingdefault}}
\selectfont\symbol{124}\fontencoding{\encodingdefault}}
\newcommand{\assign}{:=}
\newcommand{\mathd}{\mathrm{d}}
\newcommand{\mathi}{\mathrm{i}}
\newcommand{\nobracket}{}
\newcommand{\nocomma}{}
\newcommand{\noplus}{}
\newcommand{\nosymbol}{}
\newcommand{\tmname}[1]{\textsc{#1}}
\newcommand{\tmop}[1]{\ensuremath{\operatorname{#1}}}
\newcommand{\tmtextbf}[1]{{\bfseries{#1}}}
\newcommand{\tmtextit}[1]{{\itshape{#1}}}
\newtheorem{proposition}{Proposition}
\newtheorem{theorem}{Theorem}
\begin{document}

\title{Discrete Hilbert Transform {\`a} la Gundy--Varopoulos}

\author{N. Arcozzi}
\address{Dip. di Matematica\\
Universita Bolo{\~n}a\\
Bolo{\~n}a, Italy}

\author{K. Domelevo}
\address{Inst. Math. Toulouse\\
Universit{\'e} Paul Sabatier\\
Toulouse, France}

\author{S. Petermichl}
\address{Inst. Math. Toulouse\\
Universit{\'e} Paul Sabatier\\
Toulouse, France}

\begin{abstract}
  We show that the centered discrete Hilbert transform on integers applied to
  a function can be written as the conditional expectation of a transform of
  stochastic integrals, where the stochastic processes considered have jump
  components. The stochastic representation of the function and that of its
  Hilbert transform are under differential subordination and orthogonality
  relation with respect to the sharp bracket of quadratic covariation. This
  illustrates the Cauchy Riemann relations of analytic functions in this
  setting. This result is inspired by the seminal work of Gundy and Varopoulos
  on stochastic representation of the Hilbert transform in the continuous
  setting.
\end{abstract}

{\maketitle}

{\bigskip}

\section{Introduction}

{\bigskip}

The subject of discrete analyticity and discrete Cauchy--Riemann relations has
been investigated for a long time. It originated in the works of
{\tmname{Ferrand}} {\cite{Fer1944a}} and {\tmname{Isaacs}} {\cite{Isa1941b}}.
The relationship of Cauchy--Riemann relations to a certain discrete Hilbert
transform was understood in {\tmname{Duffin}} {\cite{Duf1956a}} together with
the corresponding notions of discrete harmonic conjugate functions. The
discrete Hilbert transform also appeared in relation with the
{\tmname{Riesz}}--{\tmname{Titchmarsh}} transform as described in the
significant discovery by {\tmname{Matsaev}} {\cite{Mat1961a}}. See also
{\tmname{Matsaev}}--{\tmname{Sodin}} {\cite{MatSod2000a}}.

Despite the fact that (different notions of) discrete Hilbert transforms exist
for a long time, the precise $L^{p}$ norm of these discrete operators are
still a very famous open question. Optimal norm estimates are only known in
the continuous case -- see {\tmname{Pichorides}} {\cite{Pic1972}},
{\tmname{Verbitsky}} {\cite{Ver1980a}}, {\tmname{Essen}} {\cite{Ess1984}} --
whose proofs have a probabilistic reinterpretation, in part through the
formulae of {\tmname{Gundy}}-{\tmname{Varopoulos}} {\cite{GunVar1979}}.
It is remarkable that such representations allow one to prove sharp $L^p$ estimates
for discrete second order Riesz transforms as proved in \cite{DomPet2014c}
by using the Bellman technique and in \cite{ArcDomPet2016a} by using
stochastic tools. The versatility of these stochastic representations is
also seen in \cite{DomOsePet2017a} where various sharp estimates
for discrete second order Riesz transforms are proved.
Inspired by this fact, we aim at an understanding of a discrete Hilbert
transform through a stochastic integral formula, resembling the continuous
analog of {\tmname{Gundy}}-{\tmname{Varopoulos}}.

Regarding $L^{p}$--norm
estimates for the discrete Hilbert transform of the type we study below, to
the best of our knowledge the best constant is by {\tmname{Gohberg}} and
{\tmname{Krupnik}} {\cite{GohKre1970a}}. One important ingredient in the proof
of sharp estimates is the use of harmonic polynomials and other special
functions. There exist important discrepancies between harmonic functions and
polynomials in the continuous and discrete settings, which is part of the
reason why the sharp constants for the continuous case cannot be transfered in
a straightforward manner to the discrete case. We quote in that direction the
early work of {\tmname{Heilbronn}} {\cite{Hei1949a}} and the more recent works
{\cite{JerLevShe2014a}}{\cite{LipMan2015a}}{\cite{GuaMal2013a}}.

It is our main goal to prove a stocahstic representation formula for a discrete Hilbert
transform. One should keep in mind that there are different naive ways of
defining discrete Hilbert transforms from the space of sequences $\ell^{2} (
\mathbbm{Z} )$ onto itself. A first manner found in the litterature is to
mimic the continuous Hilbert transform $\mathcal{H}_{\mathbbm{R}}$ defined on
the real line as
\[ \forall x \in \mathbbm{R}, \hspace{1em} \mathcal{H}_{\mathbbm{R}} ( f ) ( x
   ) =- \frac{1}{\pi}   \int \frac{f ( x-y )}{y}   \mathd y, \]
where the integral is to be understood in the principal value sense. Indeed, a
naive discrete counterpart of $\mathcal{H}_{\mathbbm{R}}$ defined thanks to
the discrete convolution
\[ \mathcal{H}_{\mathbbm{Z}} : \ell^{2} ( \mathbbm{Z} ) \rightarrow \ell^{2} (
   \mathbbm{Z} ) , \hspace{1em} \forall x \in \mathbbm{Z}, \hspace{1em}
   \mathcal{H}_{\mathbbm{Z}} ( f ) ( x ) =- \frac{1}{\pi} \sum_{z \in
   \mathbbm{Z}_{\ast}} \frac{f ( x-z )}{z} \]
preserves the idea of a principal value integral by skipping $z=0$ in the
summation, but lacks many other important features of the continuous Hilbert
transform. For instance, the operator $\mathcal{H}_{\mathbbm{R}}$ is an
isometry in $L^{2} ( \mathbbm{R} )$, an anti-involution, i.e.
$\mathcal{H}_{\mathbbm{R}}^{2} \assign \mathcal{H}_{\mathbbm{R}} \circ
\mathcal{H}_{\mathbbm{R}} =- \tmop{Id}_{L^{2} ( \mathbbm{R} )}$ and obeys
$\mathcal{H}_{\mathbbm{R}} \circ \sqrt{- \Delta_{x}}  = \partial_{x}$.
However, $\mathcal{H}_{\mathbbm{Z}}$ above does not possess these basic
properties. One observes that the iterate $\mathcal{H}_{\mathbbm{Z}}^{2} =
\mathcal{H}_{\mathbbm{Z}} \circ \mathcal{H}_{\mathbbm{Z}}$ is far from being
neither an isometry of $\ell^{2} ( \mathbbm{Z} )$ nor an anti-involution. One
reason for that is the fact that the summation in the discrete convolution
excludes the integer $z=0$.

In this paper, we \ take the following route, as done in
{\tmname{Lust--Piquard}} {\cite{Lus2004a}}. Modelled after the defining
equation for the continuous Hilbert transform, $\mathcal{H}_{\mathbbm{R}}
\circ \sqrt{- \Delta_{x}}  = \partial_{x} \nocomma  $, let us recall the
definition of the discrete Laplacian on $\mathbbm{Z}:$ we define the discrete
derivatives as
\[ \hspace{1em} ( \partial_{x}^{+} f ) ( x ) \assign f ( x+1 ) -f ( x ) ,
   \hspace{1em} ( \partial_{x}^{-} f ) ( x ) \assign f ( x ) -f ( x-1 ) . \]
In $\ell^{2} \nocomma ( \mathbbm{Z} )$, it follows that $( \partial_{x}^{\pm}
)^{\star} =- \partial_{x}^{\mp}$, $\Delta_{x} = \partial_{x}^{+}
\partial_{x}^{-} = \partial_{x}^{-} \partial_{x}^{+} = \partial_{x}^{+} -
\partial_{x}^{-}$, and $( - \Delta_{x} ) = ( \partial_{x}^{\pm} )^{\star} (
\partial_{x}^{\pm} )$. Equipped with these discrete operators, another
classical definition for discrete Hilbert transforms is given by
\[ \mathcal{H}^{\pm} \circ \sqrt{- \Delta_{x}}  = \partial^{\pm}_{x} . \]
Those are the \tmtextbf{left} (resp. \tmtextbf{right}) discrete Hilbert
transform $\mathcal{H}^{+}$ (resp. $\mathcal{H}^{-}$), most often used when
defining Riesz transforms on discrete groups (see {\tmname{Lust--Piquard}}
{\cite{Lus2004a}} for more details and applications to the discrete Riesz
vector). Through explicit and simple calculations, we are going to see in the
next section that
\[ \mathcal{H}^{+} \mathcal{H}^{-} = \mathcal{H}^{-} \mathcal{H}^{+} =-
   \tmop{Id} , \hspace{2em} \| \mathcal{H}^{\pm} \|_{\ell^{2} ( \mathbbm{Z} )
   \rightarrow \ell^{2} ( \mathbbm{Z} )} =1 \]
and that the kernels of these operators $\mathcal{H}^{\pm}$ are
\[ - \frac{1}{\pi} \frac{1}{n \pm 1/2} \]
respectively.

The Fourier multiplier of $\mathcal{H}_{\mathbbm{R}}$ is constant on positive
and negative frequencies respectively. This is a feature the operators
$\mathcal{H}^{\pm}$ lack. In fact, these operators have Fourier multipliers
that are a modulation of the square wave function.

As mentioned earlier, another important and meaningful role is played by the
interplay of $\mathcal{H}_{\mathbbm{R}}$ and harmonic conjugate functions. If
$u^{f}$ is the Poisson extension of a function $f$ to the upper half plane and
$v^{f}$ the Poisson extension of $\mathcal{H}_{\mathbbm{R}} f$, then the pair
$u^{f}$ and $v^{f}$ obeys Cauchy-Riemann relations. Using space--time Brownian
motion, {\tmname{Gundy}} and {\tmname{Varopoulos}} have identified pairs of
martingales $M_{t}^{f}$ and $M^{\mathcal{H}_{\mathbbm{R}} f}_{t}$ that are
orthogonal and have differential subordination. In fact, in their formula,
$M_{t}^{\mathcal{H}_{\mathbbm{R}} f}$ is a martingale transform of \
$M_{t}^{f}$. The discrete counterpart of this feature of
$\mathcal{H}_{\mathbbm{R}}$ is the main focus of our note. Using stochastic
integrals driven by semidiscrete random walks in the (semidiscrete)
upper--half space, we will see that the \tmtextbf{centered} discrete Hilbert
transform defined as
\begin{equation}
  \mathcal{H} = \frac{1}{2} ( \mathcal{H}^{+} + \mathcal{H}^{-} ) \label{eq:
  definition discrete centered hilbert transform}
\end{equation}
enjoys this stochastic representation. Indeed, we obtain for $\mathcal{H}$
certain Cauchy-Riemann relations and an analog of the
{\tmname{Gundy}}--{\tmname{Varopoulos}} formula. In fact, $M_{t}^{\mathcal{H}
f}$ and $M_{t}^{f}$ are orthogonal and differentially subordinate with respect
to the sharp bracket $\langle \cdot , \cdot \rangle$.

The main goal of the paper is to prove the following representation formula
\tmtextit{{\`a} la} {\tmname{Gundy}}--{\tmname{Varopoulos}}
{\cite{GunVar1979}}:

\begin{theorem}
  \label{T: stochastic representation}\tmtextbf{(Stochastic representation)}
  The centered discrete Hilbert transform $\mathcal{H} f$ of a function $f \in
  \ell^{2} ( \mathbbm{Z} )$ as defined in (\ref{eq: definition discrete centered
  hilbert transform}) can be written as the conditional expectation
  \[ \forall x \in \mathbbm{Z}, \hspace{1em} \mathcal{H} f ( x ) =\mathbbm{E}
     ( N^{f}_{0}   |   \mathcal{Z}_{0} = ( x,0 ) ) \]
  where $N^{f}_{t}$, $- \infty <t \leqslant 0$, is a suitable martingale
  transform of a martingale $M_{t}^{f}$ associated to $f$, and
  $\mathcal{Z}_{t}$ is a suitable semidiscrete random walk on the semidiscrete
  upper--half space $\mathbbm{Z} \times \mathbbm{R}^{+}$.
\end{theorem}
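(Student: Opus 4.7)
The plan is to mimic the continuous Gundy--Varopoulos scheme, with the semidiscrete half--plane $\mathbbm{Z} \times \mathbbm{R}^{+}$ replacing the upper half--plane and a compound Poisson random walk in the $x$--direction replacing horizontal Brownian motion. First I would introduce the semidiscrete Poisson extension $u^{f}(x,y) \assign ( e^{-y\sqrt{-\Delta_{x}}} f )( x )$, which solves $( \partial_{y}^{2} + \Delta_{x} ) u^{f} = 0$ on $\mathbbm{Z} \times \mathbbm{R}^{+}$ with boundary datum $f$ and the usual decay as $y \to \infty$; similarly I set $v^{f} \assign u^{\mathcal{H} f}$. Using the normal--derivative identity $\partial_{y} u^{f} |_{y=0} = - \sqrt{-\Delta_{x}} f$ together with $\mathcal{H} \circ \sqrt{-\Delta_{x}} = \tfrac{1}{2}( \partial_{x}^{+} + \partial_{x}^{-} )$, I would derive the semidiscrete Cauchy--Riemann relation $\partial_{y} v^{f} = - \tfrac{1}{2}( \partial_{x}^{+} + \partial_{x}^{-} ) u^{f}$, extended from the boundary to all $y>0$ by harmonicity, together with its companion expressing $\partial_{y} u^{f}$ through $v^{f}$.

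Next I would construct the semidiscrete random walk $\mathcal{Z}_{t} = ( X_{t}, Y_{t} )$, where $X_{t}$ is a continuous--time simple random walk with symmetric jump rates chosen so that its generator equals $\Delta_{x}$, and $Y_{t}$ is a diffusion on $\mathbbm{R}^{+}$ such that the joint generator matches $\Delta_{x} + \partial_{y}^{2}$. Following the \emph{background radiation} point of view of Varopoulos, I parametrize $t \in ( -\infty , 0 ]$ so that $Y_{0} = 0$, starting the walk at an arbitrarily high altitude with horizontal Lebesgue (counting) measure so as to produce unbiased conditional expectations. Applying Itô's formula for semimartingales with jumps to $M_{t}^{f} \assign u^{f}( \mathcal{Z}_{t} )$ then yields a decomposition into a continuous stochastic integral driven by $dY_{s}$ with integrand $\partial_{y} u^{f}$, and a compensated jump part whose integrand at a $\pm 1$ jump of $X$ is precisely $\partial_{x}^{\pm} u^{f}$.

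The key step is defining the martingale transform $N_{t}^{f}$. The idea is to use the Cauchy--Riemann pair from step one to swap integrands: replace $\partial_{y} u^{f}$ in the continuous part by the corresponding expression coming from $v^{f}$, and replace each $\partial_{x}^{\pm} u^{f}$ in the jump part by the matching expression on the $v^{f}$ side. This produces a well--defined martingale transform of $M^{f}$ which by construction is differentially subordinate to $M^{f}$ and orthogonal to it with respect to the sharp bracket $\langle \cdot, \cdot \rangle$. A routine but delicate verification then shows that $N_{t}^{f}$ has the same boundary distribution as $v^{f}( \mathcal{Z}_{t} )$ in the appropriate sense; in particular, conditioning on $\mathcal{Z}_{0} = ( x, 0 )$ collapses the martingale to its boundary value $v^{f}(x,0) = \mathcal{H} f ( x )$, proving the formula.

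The main obstacle is precisely the design of $N^{f}$ in the third step. In the Gundy--Varopoulos setting, orthogonality is obtained by literally rotating the Brownian matrix--valued integrand, but here the martingale $M^{f}$ has a mixed structure (continuous increments in $Y$, pure jumps in $X$), so the transform must combine a continuous integrand swap with a symmetric re--assignment of the $\pm 1$--jump integrands. The reason the \textbf{centered} operator $\mathcal{H} = \tfrac{1}{2}( \mathcal{H}^{+} + \mathcal{H}^{-} )$ --- rather than $\mathcal{H}^{+}$ or $\mathcal{H}^{-}$ alone --- is the right one to represent is that its Cauchy--Riemann identity involves the symmetric combination $\tfrac{1}{2}( \partial_{x}^{+} + \partial_{x}^{-} )$, which is exactly compatible with the symmetric jump structure of the random walk $X_{t}$ and guarantees that the associated martingale transform preserves sharp bracket orthogonality.
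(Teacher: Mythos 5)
Your setup---the semidiscrete Poisson extension, the Cauchy--Riemann relation $\partial_{y} v^{f} = -\partial_{x}^{0} u^{f}$, the compound--Poisson--plus--Brownian walk with background radiation, the It\^{o} decomposition of $M^{f}_{t}=u^{f}(\mathcal{Z}_{t})$, and the idea of swapping integrands via Cauchy--Riemann to build $N^{f}$---matches the paper's architecture. The genuine gap is in your final step. You assert that $N_{t}^{f}$ ``has the same boundary distribution as $v^{f}(\mathcal{Z}_{t})$'' and that conditioning on $\mathcal{Z}_{0}=(x,0)$ ``collapses the martingale to its boundary value $v^{f}(x,0)$.'' That is not how the identification works, and as a pathwise or distributional statement it is false: $N^{f}$ is a martingale transform of $M^{f}$, not the martingale $v^{f}(\mathcal{Z}_{t})$ associated to $\mathcal{H}f$, and its terminal value $N_{0}^{f}$ is a genuinely random variable whose \emph{conditional expectation} given the exit point happens to equal $\mathcal{H}f(x)$. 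Nothing in your argument establishes that equality; the phrase ``routine but delicate verification'' is exactly where the whole proof lives.

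The paper closes this gap by duality. One defines $\mathcal{T}f(x)\assign\mathbbm{E}(N_{0}^{f}\mid\mathcal{Z}_{0}=x)$ and pairs against a test function $g$: since $M_{0}^{g}=g(\mathcal{Z}_{0})$ depends only on the exit point, $(\mathcal{T}f,g)=\sum_{x}\mathbbm{E}(N_{0}^{f}M_{0}^{g}\mid\mathcal{Z}_{0}=x)$, which by the product formula equals the expectation of $\int_{-\infty}^{0}\mathd[N^{f},M^{g}]_{t}$. This covariation is computed explicitly (only the $\mathcal{X}$--$\mathcal{X}$ and $\mathcal{Y}$--$\mathcal{Y}$ brackets survive, giving $\partial_{y}f\,\partial_{x}^{0}g$ and $-\partial_{x}^{0}f\,\partial_{y}g$ terms), and the expectation of the resulting time integral is converted into the deterministic integral $\int_{0}^{\infty}\sum_{x}\{\partial_{y}f\,\partial_{x}^{0}g-\partial_{x}^{0}f\,\partial_{y}g\}\,2y\,\mathd y$ using the occupation density $2y\,\mathd y$ of the background noise. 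Finally this is matched with a separately proved weak, Littlewood--Paley type formulation $(\mathcal{H}f,g)=\int_{0}^{\infty}(A\nabla_{y,x}f,\nabla_{y,x}g)\,y\,\mathd y$ with an antisymmetric $4\times 4$ matrix $A$, itself derived from the Fourier computation of the Cauchy--Riemann relations for $\mathcal{H}^{\pm}$. You would need both of these ingredients---the weak formulation of $(\mathcal{H}f,g)$ and the occupation--density computation of $\mathbbm{E}\int\mathd[N^{f},M^{g}]$---to complete the argument; neither appears in your proposal.
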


\subsection*{Outline of the paper}The next section is devoted to a few basic
properties of the discrete hilbert transforms mentionned above. Section
\ref{S: Poisson extension and weak formulations} provides semidiscrete Poisson
extensions, weak formulations and semidiscrete Cauchy--Riemann relations. We
introduce the relevant stochastic integrals, martingale transforms and
quadratic covariations in Section \ref{S: stochastic representation}. Finally,
we prove the representation \tmtextit{{\`a} la}
{\tmname{Gundy}}--{\tmname{Varopoulos}} of the centered discrete Hilbert
transform in Section \ref{S: proof of the main result}.

{\bigskip}

\section{Basic properties}\label{S: basic properties}

{\bigskip}

Let $\mathcal{F}$ be the discrete fourier transform
\begin{eqnarray*}
  \mathcal{F} & : & \ell^{2} ( \mathbbm{Z} ) \longrightarrow L^{2} \left(
  \left] - \frac{1}{2} , \frac{1}{2} \right[ \right)\\
  &  & \mathcal{F} ( f ) ( \xi ) \assign \hat{f} ( \xi ) = \sum_{x \in
  \mathbbm{Z}} f ( x )  e^{- \mathi 2 \pi x \xi}
\end{eqnarray*}
with Fourier inverse
\begin{eqnarray*}
  \mathcal{F}^{-1} & : & L^{2} \left( \left] - \frac{1}{2} , \frac{1}{2}
  \right[ \right) \longrightarrow \ell^{2} ( \mathbbm{Z} )\\
  &  & \mathcal{F}^{-1} ( \hat{f} ) ( x ) = \int_{-1/2}^{1/2} \hat{f} ( \xi )
   e^{+ \mathi 2 \pi x \xi}   \mathd \xi
\end{eqnarray*}
Through explicit and simple calculations, we are going to see that

\begin{proposition}
  \label{P: equivalent definitions}\tmtextbf{(Equivalent definitions of
  $\mathcal{H}^{\pm}$)}
  \begin{equation}
    \widehat{\mathcal{H}^{\pm}} ( \xi ) =e^{\pm \mathi \pi \xi} \frac{\sin (
    \pi \xi )}{| \sin ( \pi \xi ) |} =e^{\mathi \pi \xi}   \tmop{SQ} ( \pi \xi
    )
  \end{equation}
  \begin{equation}
    \mathcal{H}^{\pm} f ( x ) =- \frac{1}{\pi} \sum_{z \in \mathbbm{Z}}
    \frac{f ( x-z )}{z \pm \tfrac{1}{2}}
  \end{equation}
\end{proposition}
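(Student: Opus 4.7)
The plan is to start from the defining relation $\mathcal{H}^{\pm}\circ\sqrt{-\Delta_x}=\partial_x^{\pm}$ stated in the introduction, pass to the Fourier side where both sides become multipliers, divide, and then invert the transform to read off the kernel. The definition forces this to be a computation rather than a conceptual argument, so the proof essentially reduces to three explicit evaluations.

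First I would compute the multipliers of the elementary operators. Using the definition of $\mathcal{F}$, one gets $\widehat{\partial_x^+ f}(\xi)=(e^{\mathi 2\pi\xi}-1)\hat f(\xi)$ and $\widehat{\partial_x^- f}(\xi)=(1-e^{-\mathi 2\pi\xi})\hat f(\xi)$. Factoring out a half-angle phase turns these into
\[
(e^{\mathi 2\pi\xi}-1)=e^{\mathi\pi\xi}\cdot 2\mathi\sin(\pi\xi),\qquad (1-e^{-\mathi 2\pi\xi})=e^{-\mathi\pi\xi}\cdot 2\mathi\sin(\pi\xi),
\]
while $-\Delta_x=(\partial_x^{\pm})^{\star}\partial_x^{\pm}$ has multiplier $|e^{\mathi 2\pi\xi}-1|^2=4\sin^2(\pi\xi)$, so the positive square root $\sqrt{-\Delta_x}$ has multiplier $2|\sin(\pi\xi)|$. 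Dividing then yields
\[
\widehat{\mathcal{H}^{\pm}}(\xi)=\mathi e^{\pm\mathi\pi\xi}\,\frac{\sin(\pi\xi)}{|\sin(\pi\xi)|},
\]
which is (up to the standard $\mathi$ absorbed in the notation $\mathrm{SQ}$) the first claimed formula. I would be careful to note that $\sqrt{-\Delta_x}$ is a positive operator, so taking the positive root $2|\sin(\pi\xi)|$ (not $2\sin(\pi\xi)$) is forced; this is what creates the $\operatorname{sgn}(\sin(\pi\xi))$ piece and hence the square-wave character.

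For the kernel, I would invert the multiplier. On $]-\tfrac12,\tfrac12[$ the factor $\sin(\pi\xi)/|\sin(\pi\xi)|$ is exactly $\operatorname{sgn}(\xi)$, so the kernel equals
\[
h^{\pm}(n)=\mathi\int_{-1/2}^{1/2}\operatorname{sgn}(\xi)\,e^{\mathi\pi(2n\pm 1)\xi}\,\mathd\xi
=-2\int_0^{1/2}\sin\!\bigl(\pi(2n\pm 1)\xi\bigr)\,\mathd\xi.
\]
A direct primitive gives $h^{\pm}(n)=\frac{2}{\pi(2n\pm 1)}\bigl[\cos(\pi(2n\pm 1)/2)-1\bigr]$, and since $2n\pm 1$ is odd the cosine vanishes, leaving $h^{\pm}(n)=-\frac{1}{\pi(n\pm 1/2)}$. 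Recognising the convolution $\mathcal{H}^{\pm}f(x)=\sum_z h^{\pm}(z)f(x-z)$ yields the stated kernel representation.

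The main obstacle is essentially cosmetic rather than structural: one must handle the two signs $\pm$ in parallel without introducing sign errors, and one must verify that the half-integer shift $2n\pm 1$ makes the boundary cosine vanish (this is the only place where the discrete-versus-continuous distinction really bites, because it is what removes one of the two boundary terms in the sine integral and produces the clean $1/(n\pm\tfrac12)$ kernel rather than something with principal-value issues at $z=0$). Everything else is a bookkeeping exercise in Fourier inversion.
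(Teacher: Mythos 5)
Your proof is correct, and the first half (computing the multipliers of $\partial_x^{\pm}$ and $\sqrt{-\Delta_x}$ and dividing) is exactly the paper's argument. For the kernel you take the converse route: the paper \emph{verifies} the guessed kernel $-\tfrac{1}{\pi}\tfrac{1}{n\pm 1/2}$ by computing its Fourier transform and matching it against the known Fourier series of the square wave $\operatorname{SQ}$, whereas you \emph{derive} the kernel by inverting the multiplier, using that $\sin(\pi\xi)/|\sin(\pi\xi)|=\operatorname{sgn}(\xi)$ on $\left]-\tfrac12,\tfrac12\right[$ and evaluating the elementary integral $\int_0^{1/2}\sin(\pi(2n\pm1)\xi)\,\mathd\xi$, where the oddness of $2n\pm1$ kills the boundary cosine. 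Your version is self-contained (no appeal to the square-wave series, whose bookkeeping in the paper's displayed computation is in any case rather garbled) and it produces the kernel rather than merely checking it; the paper's version makes the square-wave structure of the symbol explicit, which is used later in the discussion of the multiplier. One small caveat: the factor of $\mathi$ is genuinely present in the symbol $\mathi\, e^{\pm\mathi\pi\xi}\operatorname{sgn}(\sin\pi\xi)$ (the paper's own proof produces it too, and it is needed for the kernel computation to close); it is the proposition's displayed statement that drops it, so rather than saying the $\mathi$ is ``absorbed in $\operatorname{SQ}$'' you should simply flag the missing $\mathi$ in the statement as a typo.
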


As a consequence, the reader may check that we also have the following
properties,

\begin{proposition}
  \tmtextbf{(Basic properties)} The discrete Hilbert transforms
  $\mathcal{H}^{\pm}$ obey the following analogs of the continuous Hilbert
  transform
  \begin{equation}
    \forall f,g \in \ell^{2} ( \mathbbm{Z} ) , \hspace{1em} (
    \mathcal{H}^{\pm} f,g )_{\ell^{2} ( \mathbbm{Z} )} =- ( f,
    \mathcal{H}^{\mp} g )_{\ell^{2} ( \mathbbm{Z} )} , \hspace{1em} i.e
    \nosymbol .  ( \mathcal{H}^{\pm} )^{\star} =- \mathcal{H}^{\mp}
  \end{equation}
  \begin{equation}
    \mathcal{H}^{+} \mathcal{H}^{-} = \mathcal{H}^{-} \mathcal{H}^{+} =-
    \tmop{Id}
  \end{equation}
  \begin{equation}
    \mathcal{H}^{\pm} \mathcal{H}^{\pm} =-S_{\pm 1} \tmop{Id}
  \end{equation}
  \begin{equation}
    \| \mathcal{H}^{\pm} \|_{\ell^{2} ( \mathbbm{Z} ) \rightarrow \ell^{2} (
    \mathbbm{Z} )} =1
  \end{equation}
  where $S_{+1}$ (resp. $S_{-1}$) is the right (resp. left) shift operator $(
  S_{\pm 1} f ) ( x ) =f ( x \mp 1 )$.
\end{proposition}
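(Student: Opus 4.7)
The plan is to reduce all four identities to short computations on the Fourier side, using Proposition~\ref{P: equivalent definitions} together with Plancherel's theorem. Since $\mathcal{F}$ is a unitary isomorphism $\ell^{2}(\mathbbm{Z})\to L^{2}((-1/2,1/2))$, the operator $\mathcal{H}^{\pm}$ is unitarily equivalent to multiplication by its symbol $m^{\pm}(\xi):=\widehat{\mathcal{H}^{\pm}}(\xi)$; composition of operators corresponds to pointwise multiplication of symbols, the adjoint corresponds to complex conjugation of the symbol, and the $\ell^{2}\to\ell^{2}$ operator norm equals $\|m^{\pm}\|_{L^{\infty}}$.

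The key property of $m^{\pm}$ needed throughout is both its unit modulus and its precise phase. Writing $e^{\pm 2\pi\mathi\xi}-1=\pm 2\mathi e^{\pm\mathi\pi\xi}\sin(\pi\xi)$ in the Fourier-side form of $\mathcal{H}^{\pm}\sqrt{-\Delta_{x}}=\partial_{x}^{\pm}$ and dividing by the symbol $2|\sin(\pi\xi)|$ of $\sqrt{-\Delta_{x}}$, one obtains $m^{\pm}(\xi)=\mathi e^{\pm\mathi\pi\xi}\,\mathrm{SQ}(\pi\xi)$, so that $|m^{\pm}(\xi)|=1$ almost everywhere and both $m^{+}$ and $m^{-}$ carry the same imaginary prefactor $\mathi$, which is the source of the minus signs appearing in the identities below.

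With this in hand each claim is a one-line verification. For the norm, $\|\mathcal{H}^{\pm}\|_{\ell^{2}\to\ell^{2}}=\|m^{\pm}\|_{L^{\infty}}=1$, with the lower bound furnished by any Fourier-localized approximate eigenfunction. For the adjoint, $\overline{m^{\pm}(\xi)}=-\mathi e^{\mp\mathi\pi\xi}\,\mathrm{SQ}(\pi\xi)=-m^{\mp}(\xi)$, whence $(\mathcal{H}^{\pm})^{\star}=-\mathcal{H}^{\mp}$; alternatively, expanding $\langle \mathcal{H}^{\pm}f,g\rangle_{\ell^{2}}$ as a double sum via the convolution kernel in Proposition~\ref{P: equivalent definitions} and substituting $z\mapsto -z$ turns $(z\pm\tfrac{1}{2})^{-1}$ into $-(z\mp\tfrac{1}{2})^{-1}$ and yields $-\langle f,\mathcal{H}^{\mp}g\rangle_{\ell^{2}}$ directly. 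Finally, $m^{+}(\xi)m^{-}(\xi)=\mathi^{2}\cdot\mathrm{SQ}(\pi\xi)^{2}=-1$ produces $\mathcal{H}^{+}\mathcal{H}^{-}=\mathcal{H}^{-}\mathcal{H}^{+}=-\mathrm{Id}$, while $m^{\pm}(\xi)^{2}=-e^{\pm 2\pi\mathi\xi}$ is recognizable as $-1$ times the Fourier symbol of the appropriate unit shift, delivering the stated identity $\mathcal{H}^{\pm}\mathcal{H}^{\pm}=-S_{\pm 1}\mathrm{Id}$. No step here is a genuine obstacle: the only care needed is to preserve the imaginary factor $\mathi$ arising from the symbol of $\partial_{x}^{\pm}$, and to keep the sign conventions for $\partial_{x}^{\pm}$, the shift $S_{\pm 1}$, and the Fourier transform aligned throughout.
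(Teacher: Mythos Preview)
Your approach is exactly what the paper intends: it does not give a separate proof of this proposition but simply says ``the reader may check'' these identities as consequences of Proposition~\ref{P: equivalent definitions}, and your Fourier-multiplier verification is precisely that check. One small point of care: when you write that $m^{\pm}(\xi)^{2}=-e^{\pm 2\pi\mathi\xi}$ ``is recognizable as $-1$ times the Fourier symbol of the appropriate unit shift,'' make sure you actually match the sign to the paper's convention $(S_{\pm 1}f)(x)=f(x\mp 1)$, since with $\widehat{S_{+1}}(\xi)=e^{-2\pi\mathi\xi}$ one finds $(\mathcal{H}^{+})^{2}=-S_{-1}$ rather than $-S_{+1}$; this is a typo-level issue in the statement rather than a flaw in your method.
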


\begin{proof}
  \tmtextbf{(of Proposition \ref{P: equivalent definitions})} One has
  succesively
  \begin{eqnarray*}
    \widehat{\partial_{x}^{\pm}} ( \xi ) & = & e^{\pm \mathi 2 \pi \xi} -1=2
    \mathi  e^{\pm \mathi \pi \xi}   \sin ( \pi \xi )\\
    \widehat{\Delta_{x}} ( \xi ) & = & -4  \sin^{2} ( \pi \xi )\\
    \widehat{\sqrt{- \Delta_{x}}} ( \xi ) & = & 2  | \sin ( \pi \xi ) |\\
    \widehat{\mathcal{H}^{\pm}} ( \xi ) & = & \mathi  e^{\pm \mathi \pi \xi}
    \frac{\sin ( \pi \xi )}{| \sin ( \pi \xi ) |} =: \mathi  e^{\mathi \pi
    \xi}  \tmop{SQ} ( \pi \xi ) .
  \end{eqnarray*}
  Now, computing the Fourier transform of the discrete kernel, we check
  \begin{eqnarray*}
    \sum_{y \in \mathbbm{Z}} - \frac{1}{\pi} \frac{1}{y+ \tfrac{1}{2}} e^{-2
    \pi \mathi y \xi} & = & - \frac{2}{\pi} \sum_{y} \frac{e^{- \mathi ( 2y+1
    ) \tfrac{\xi}{2}}}{2y+1} e^{2 \pi \mathi \tfrac{\xi}{2}}\\
    &  & \\
    & = & - \frac{2}{\pi} \mathi e^{\pi \mathi \xi} \sum_{m} \frac{\sin
    \left( -2 \pi ( 2y+1 ) \tfrac{\xi}{2} \right)}{2y+1}\\
    &  & \\
    & = & - \frac{4}{\pi} \mathi e^{2 \pi \mathi \tfrac{\xi}{2}} \sum_{m
    \geqslant 0} \frac{\sin \left( \left. 2 \pi ( 2y+1 ) \left( -
    \tfrac{\xi}{2} \right. \right) \right)}{2y+1}\\
    &  & \\
    & = & - \mathi e^{\pi \mathi \xi \frac{}{}} \tmop{SQ} \left( - \frac{\pi
    \xi}{2} \right)\\
    &  & \\
    & = & \mathi e^{\pi \mathi \xi} \frac{\sin ( \pi \xi )}{| \sin ( \pi \xi
    ) |} ,
  \end{eqnarray*}
  where we used the Fourier transform of the square wave function
  \[ \tmop{SQ} ( t ) = \frac{4}{\pi} \sum^{\infty}_{n =0} \frac{\sin \left(
     \tfrac{2 \pi}{T} ( 2n+1 ) t \right)}{2n+1} . \]
  Similarly, the symbol of $\mathcal{H}^{-}$ is $\mathi e^{- \pi \mathi \xi}
  \frac{\sin ( \pi \xi )}{| \sin ( \pi \xi ) |} =- \mathi  e^{- \pi i \xi
  } \tmop{SQ} \left( - \frac{\xi}{2} \right)$
\end{proof}

Finally, for the centered discrete Hilbert transform, one has
\begin{eqnarray}
  \mathcal{H} \circ \mathcal{H} & = & [ ( \mathcal{H}^{+} )^{2} +2 (
  \mathcal{H}^{+} ) ( \mathcal{H}^{-} ) + ( \mathcal{H}^{-} )^{2} ] /4 
  \label{eq: H*H}\\
  & = & [ ( \mathcal{H}^{+} )^{2} +2 ( \mathcal{H}^{+} ) ( \mathcal{H}^{-} )
  + ( \mathcal{H}^{-} )^{2} ] /4 \nonumber\\
  & = & [ ( \mathcal{H}^{+} ) S^{+} ( \mathcal{H}^{-} ) +2 ( \mathcal{H}^{+}
  ) ( \mathcal{H}^{-} ) +S^{-} ( \mathcal{H}^{+} ) ( \mathcal{H}^{-} ) ] /4
  \nonumber\\
  & = & - \left( \frac{1}{4} S_{-} + \frac{1}{2} \tmop{Id} + \frac{1}{4}
  S_{+} \right) , \nonumber
\end{eqnarray}
where we recognize a smoothed version of minus the identity.

{\bigskip}

\section{Semi-discrete Poisson extensions and weak formulations}\label{S:
Poisson extension and weak formulations}

{\bigskip}

\subsection*{Semidiscrete Poisson extensions}Defining the selfadjoint operator
$A$ as the square root $A= \sqrt{- \Delta_{x}}$, we set $P_{y} =e^{-y A}$, $y
\in [ 0, \infty [$. The Poisson extension of a function $f \in \ell^{2} (
\mathbbm{Z} )$ is the function $f ( y,x ) \assign ( P_{y} f ) ( x )$ defined
on $\mathbbm{R}^{+} \times \mathbbm{Z}$. Note that we use the same name for
$f$ and it's Poisson extension. It follows that the function $f ( y,x )$
satisfies in $( 0, \infty ) \times \mathbbm{Z}$
\[ \partial_{y} f ( x,y ) =-A f ( x,y ) , \hspace{1em} \partial^{2}_{y} f (
   x,y ) =A^{2} f ( x,y ) =- \Delta_{x} f ( x,y ) \]
that is $f ( x,y )$ is harmonic in $( 0, \infty ) \times \mathbbm{Z}$:
\[ ( \Delta_{y} + \Delta_{x} ) f=0. \]
For any $f$ and $g$ in $\ell^{2} ( \mathbbm{Z} )$, we note
\[ ( f,g )_{\ell^{2} ( \mathbbm{Z} )} \assign \sum_{x \in \mathbbm{Z}} f ( x ) g
   ( x ) \]
the scalar product of $\ell^{2} ( \mathbbm{Z} )$. We will often omit the
subscript in the scalar product and write $( f,g )$ instead of $( f,g )_{\ell^{2}
( \mathbbm{Z} )}$. Moreover we have
\begin{eqnarray*}
  \forall f,g \in \ell^{2} ( \mathbbm{Z} ) , \hspace{1em} ( - \Delta_{x} f,g ) &
  \assign & ( ( \partial_{x}^{\pm} )^{\star} ( \partial_{x}^{\pm} ) f,g ) = (
  \partial_{x}^{\pm} f, \partial_{x}^{\pm} g )
\end{eqnarray*}
and also
\begin{eqnarray*}
  \forall f,g \in \ell^{2} ( \mathbbm{Z} ) , \hspace{1em} ( - \Delta_{x} f,g ) &
  = & ( A^{2} f,g ) = ( A f,A g ) = ( \partial_{y} f, \partial_{y} g )
\end{eqnarray*}
We will collect below all the derivations in the $4$--vector
\[ \nabla_{y,x} = ( \partial_{y} , \partial_{x}^{+} , \partial_{y} ,
   \partial_{x}^{-} )^{\star} . \]
Notice that we repeat twice the derivation $\partial_{y}$. The reasons for
that will become clear later.

\begin{theorem}
  \tmtextbf{(Weak formulation of the identity operator)} Assume $f$ and $g$ in
  $\ell^{2} ( \mathbbm{Z} )$. Let $\mathcal{I}$ denote the identity operator. We
  have the Littlewood--Paley identity
  \[ ( \mathcal{I} f,g ) = \int_{y=0}^{\infty} \sum_{x \in \mathbbm{Z}} (
     \nabla_{x,y} f ( x,y ) , \nabla_{x,y} g ( x,y ) )  y  \mathd y \]
\end{theorem}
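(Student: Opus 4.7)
The plan is to reduce the double sum/integral to a single Fourier/Plancherel computation by exploiting the two ways to express $(-\Delta_x f, g)$ given just before the statement. Writing out the four components of $\nabla_{x,y}$ and summing over $x$ at fixed $y$, I obtain
\[
  \sum_{x \in \mathbbm{Z}} ( \nabla_{x,y} f ( x,y ) , \nabla_{x,y} g ( x,y ) )
  = 2 ( \partial_{y} f, \partial_{y} g ) + ( \partial_{x}^{+} f, \partial_{x}^{+} g ) + ( \partial_{x}^{-} f, \partial_{x}^{-} g ),
\]
where the inner products on the right are the $\ell^{2} ( \mathbbm{Z} )$ inner products applied to the slices $f(\cdot,y)$, $g(\cdot,y)$. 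Applying the two identities $( \partial_{x}^{\pm} f, \partial_{x}^{\pm} g ) = ( - \Delta_{x} f, g )$ and $( \partial_{y} f, \partial_{y} g ) = ( A f, A g ) = ( A^{2} f, g ) = ( - \Delta_{x} f, g )$, valid for Poisson extensions by the computation already carried out in the excerpt, all four terms collapse to the same quantity, giving
\[
  \sum_{x} ( \nabla_{x,y} f , \nabla_{x,y} g )(x,y)
  = 4 ( - \Delta_{x} P_{y} f , P_{y} g )_{\ell^{2} ( \mathbbm{Z} )}
  = 4 ( A^{2} P_{2y} f , g )_{\ell^{2} ( \mathbbm{Z} )}
\]
where the last equality uses the selfadjointness of $A$ and the semigroup property $P_{y} P_{y} = P_{2y}$.

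Next I would pass to the Fourier side. The discrete Fourier transform $\mathcal{F}$ is an isometry onto $L^{2} ( ] - 1/2 , 1/2 [ )$, and the computations from the previous section give that the symbols of $A$ and $P_{y}$ are $2 | \sin ( \pi \xi ) |$ and $e^{- 2 y | \sin ( \pi \xi ) |}$ respectively. Hence by Plancherel
\[
  4 ( A^{2} P_{2y} f , g ) = 16 \int_{-1/2}^{1/2} \sin^{2} ( \pi \xi )  e^{- 4 y | \sin ( \pi \xi ) |}   \hat{f} ( \xi )   \overline{\hat{g} ( \xi )}   \mathd \xi .
\]
The integrand is absolutely integrable on $( 0 , \infty ) \times ] - 1/2 , 1/2 [$ against the measure $y \, \mathd y \, \mathd \xi$, as the $L^{2}$ factors $\hat{f} , \hat{g}$ are dominated via Cauchy--Schwarz while the remaining factor is bounded pointwise by $16 \sin^{2} ( \pi \xi )  e^{- 4 y | \sin ( \pi \xi ) |}$ whose integral against $y \, \mathd y$ is $1$. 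This justifies Fubini.

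Swapping the order of integration, the inner $y$--integral evaluates by the classical identity $\int_{0}^{\infty} e^{- a y}  y  \mathd y = 1 / a^{2}$ with $a = 4 | \sin ( \pi \xi ) |$, giving $1 / ( 16 \sin^{2} ( \pi \xi ) )$. This exactly cancels the prefactor $16 \sin^{2} ( \pi \xi )$, and what remains is $\int \hat{f}   \overline{\hat{g}}   \mathd \xi = ( f , g )_{\ell^{2} ( \mathbbm{Z} )}$ by Plancherel, which is the desired identity. The main technical point to watch is the Fubini justification near $\xi = 0$, where the exponential does not decay in $y$; this is harmless because the $\sin^{2}( \pi \xi )$ factor compensates precisely, as shown above. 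Nothing beyond $L^{2}$ regularity of $f , g$ is required.
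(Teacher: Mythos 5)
Your proof is correct, but it takes a different route from the paper's after the common first step. Both arguments begin by collapsing the slice sum $\sum_{x}(\nabla_{y,x}f,\nabla_{y,x}g)$ to $4(\partial_{y}P_{y}f,\partial_{y}P_{y}g)=4(-\Delta_{x}P_{y}f,P_{y}g)$ using $(\partial_{x}^{\pm}f,\partial_{x}^{\pm}g)=(-\Delta_{x}f,g)=(Af,Ag)=(\partial_{y}f,\partial_{y}g)$. From there the paper stays Fourier-free: it applies the half-line Green's identity $F(0)=\int_{0}^{\infty}F''(y)\,y\,\mathd y$ to the scalar function $F(y)=(P_{y}f,P_{y}g)_{\ell^{2}(\mathbbm{Z})}$, using harmonicity to get $F''=4(\partial_{y}f,\partial_{y}g)$, and never writes down a symbol. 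You instead push everything to the Fourier side via the semigroup property $P_{y}P_{y}=P_{2y}$ and selfadjointness, reduce to $16\int\sin^{2}(\pi\xi)e^{-4y|\sin(\pi\xi)|}\hat f\,\overline{\hat g}\,\mathd\xi$, and evaluate $\int_{0}^{\infty}e^{-ay}y\,\mathd y=1/a^{2}$ after Fubini; this is really the same cancellation in explicit form (the paper's identity applied to $F(y)=e^{-ay}$ is precisely $\int_{0}^{\infty}a^{2}e^{-ay}y\,\mathd y=1$). What your version buys is that the convergence issues are handled honestly: the paper's appeal to ``$F$ smooth enough and decaying at infinity'' silently requires $F(R)\to 0$ and $RF'(R)\to 0$, which is not obvious since $P_{y}$ has symbol $e^{-2y|\sin(\pi\xi)|}$ with no decay at $\xi=0$; your Tonelli estimate, dominated by $\|f\|_{\ell^2}\|g\|_{\ell^2}$ after integrating out $y$, settles exactly this point. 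What the paper's version buys is independence from the explicit Fourier symbol, so it transfers verbatim to any setting where $P_{y}=e^{-yA}$ with $A=\sqrt{-\Delta}$ makes sense. One cosmetic remark: your statement that the $y$-integral of $16\sin^{2}(\pi\xi)e^{-4y|\sin(\pi\xi)|}$ equals $1$ fails at the single point $\xi=0$ (where it equals $0$), but this is a null set and harmless.
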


\begin{proof}
  Notice first that for any functions $f$ and $g$ in $\ell^{2} ( \mathbbm{Z} )$,
  we have, using successively discrete integration by parts, and the
  definition of $A$,
  \begin{eqnarray*}
    ( - \Delta_{x} f,g ) & \assign & ( ( \partial_{x}^{\pm} )^{\star} (
    \partial_{x}^{\pm} ) f,g ) = ( \partial_{x}^{\pm} f, \partial_{x}^{\pm} g
    )\\
    & = & ( A^{2} f,g ) = ( A f,A g )
  \end{eqnarray*}
  In the particular case where both $f=f ( x,y )$ and $g=g ( x,y )$ are
  Poisson extensions, then
  \[ ( A f,A g ) = ( \partial_{y} f, \partial_{y} g ) \]
  To summarize, when the functions $f ( y ) =P_{y} f$ and $g ( y ) =P_{y} g$
  are Poisson extensions, we have in the upper half space
  \[ ( - \Delta_{x} f ( y ) ,g ( y ) ) = ( \partial_{x}^{\pm} f ( y ) ,
     \partial_{x}^{\pm} g ( y ) ) = ( \partial_{y} f ( y ) , \partial_{y} g (
     y ) ) = ( A f ( y ) ,A g ( y ) ) . \]
  But for any function $F ( y )$ smooth enough and decaying at infinity, we
  have
  \[ F ( 0 ) = \int_{0}^{\infty} F'' ( y ) y \mathd y \]
  Applying this identity to $F ( y ) = ( f ( y ) ,g ( y ) )_{\ell^{2} (
  \mathbbm{Z} )}$ yields,
  \begin{eqnarray*}
    ( f,g )_{\ell^{2} ( \mathbbm{Z} )} & = & \int_{0}^{\infty} \{ (
    \partial^{2}_{y} f,g ) +2 (   \partial_{y} f, \partial_{y} g ) + ( f,
    \partial^{2}_{y} g ) \} y  \mathd y\\
    & = & 4 \int_{0}^{\infty} ( \partial_{y} f, \partial_{y} g )  y  \mathd
    y\\
    & = & 4 \int_{0}^{\infty} ( \partial_{x}^{+} f, \partial_{x}^{+} g ) y 
    \mathd y=4 \int_{0}^{\infty} ( \partial_{x}^{-} f, \partial_{x}^{-} g )  y
    \mathd y\\
    & = & \int_{0}^{\infty} \{ ( \partial_{y} f, \partial_{y} g ) + (
    \partial_{x}^{+} f, \partial_{x}^{+} g ) + ( \partial_{y} f, \partial_{y}
    g ) + ( \partial_{x}^{-} f, \partial_{x}^{-} g ) \}  y  \mathd y\\
    & = & \int_{0}^{\infty} ( \nabla_{y,x} f, \nabla_{y,x} g )  y  \mathd y,
  \end{eqnarray*}
  as announced.
\end{proof}

\subsection*{Cauchy--Riemman relations and weak formulation}

\begin{theorem}
  \label{T: Cauchy-Riemann}\tmtextbf{(Cauchy-Riemann relations)} Let $f$ and
  $g$ in $\ell^{2} ( \mathbbm{Z} )$. Let $\mathcal{H} f ( y,x )$ denote the
  Poisson extension of $\mathcal{H} f ( x )$, and $f ( y,x )$ that of $f ( x
  )$. We have the semidiscrete Cauchy--Riemman relations, for all $( y,x ) \in
  R^{+} \times \mathbbm{Z}$,

  \[ \partial_{y} \mathcal{H}^{\pm} f=- \partial^{\mp}_{x} f, \hspace{1em}
     \partial_{x}^{\pm} \mathcal{H}^{\pm} f= \partial_{y} f, \label{eq:
     Cauchy-Riemann} \]
  which implies for the centered discrete Hilbert transform, for all $( y,x )
  \in R^{+} \times \mathbbm{Z}$,
  \[ \partial_{y} \mathcal{H} f=- \partial_{x}^{0} f, \hspace{1em}
     \partial_{x}^{0} \mathcal{H} f= \left( \frac{1}{4} S_{-} + \frac{1}{2}
     \tmop{Id} + \frac{1}{4} S_{+} \right) \partial_{y} f. \]
\end{theorem}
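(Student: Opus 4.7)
The plan is to verify the identities at the level of operators, exploiting the fact that $\mathcal{H}^{\pm}$, $\partial_{x}^{\pm}$, $A \assign \sqrt{-\Delta_{x}}$, and the Poisson semigroup $P_{y} = e^{-yA}$ are all Fourier multipliers in the $x$-variable on $\ell^{2}(\mathbbm{Z})$ and therefore commute pairwise. The two structural inputs are the defining property $\mathcal{H}^{\pm} \circ A = \partial_{x}^{\pm}$ of the one-sided Hilbert transforms (recalled in the introduction), together with the semigroup identity $\partial_{y} P_{y} = -A P_{y}$, which says that $\partial_{y}$ acts as $-A$ on any Poisson extension.

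For the first Cauchy--Riemann relation I chain these two facts: for $f \in \ell^{2}(\mathbbm{Z})$ with Poisson extension $f(y,x) = (P_{y} f)(x)$,
\[ \partial_{y} \mathcal{H}^{\pm} f(y,x) = -A\,\mathcal{H}^{\pm} f(y,x) = -\mathcal{H}^{\pm} A\,f(y,x) = -\partial_{x}^{\pm} f(y,x), \]
where the middle equality is commutativity of two multipliers and the last is the defining equation of $\mathcal{H}^{\pm}$. For the second relation, factor $\partial_{x}^{\pm} = \mathcal{H}^{\pm} A$ once more to get $\partial_{x}^{\pm} \mathcal{H}^{\pm} = (\mathcal{H}^{\pm})^{2} A$; then $(\mathcal{H}^{\pm})^{2} = -S_{\pm 1}$ from the basic properties of Section \ref{S: basic properties}, combined with $A f = -\partial_{y} f$, gives $\partial_{x}^{\pm} \mathcal{H}^{\pm} f = S_{\pm 1}\,\partial_{y} f$. (I would reconcile the precise $\pm/\mp$ labelling with the statement at the end.)

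For the centered statement the key observation is that, averaging the defining equations, $\mathcal{H} \circ A = \partial_{x}^{0}$, so the centered Hilbert transform plays exactly the same role for $\partial_{x}^{0}$ that $\mathcal{H}^{\pm}$ plays for $\partial_{x}^{\pm}$. Repeating the two-line calculation above with $\mathcal{H}$ in place of $\mathcal{H}^{\pm}$ yields
\[ \partial_{y} \mathcal{H} f = -\mathcal{H} A\,f = -\partial_{x}^{0} f, \qquad \partial_{x}^{0} \mathcal{H} f = \mathcal{H}^{2} A\,f = -\mathcal{H}^{2}\,\partial_{y} f, \]
and substituting the expression for $\mathcal{H}^{2}$ already computed in (\ref{eq: H*H}) replaces $-\mathcal{H}^{2}$ by the smoothed operator $\tfrac{1}{4} S_{-} + \tfrac{1}{2} \tmop{Id} + \tfrac{1}{4} S_{+}$, as claimed.

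There is no real obstacle beyond bookkeeping; the whole argument is an application of the three algebraic identities ($\mathcal{H}^{\pm} A = \partial_{x}^{\pm}$, $(\mathcal{H}^{\pm})^{2} = -S_{\pm 1}$, and $\partial_{y} P_{y} = -A P_{y}$) plus pairwise commutativity of Fourier multipliers in $x$. The conceptual point worth isolating is the identity $\mathcal{H} A = \partial_{x}^{0}$, which singles out $\partial_{x}^{0}$ as the ``correct'' spatial derivative for the discrete Cauchy--Riemann pair; the smoothing factor in the second relation is then forced to be $-\mathcal{H}^{2}$, mirroring the continuous identity $\mathcal{H}_{\mathbbm{R}}^{2} = -\tmop{Id}$.
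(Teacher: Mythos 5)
Your argument and the paper's are at bottom the same verification on the Fourier side, but you package it as operator algebra (the three identities $\mathcal{H}^{\pm} A = \partial_{x}^{\pm}$, $\partial_{y} P_{y} = -A P_{y}$, and commutativity of multipliers), whereas the paper writes out every derivative $\partial_{x}^{\pm} u$, $\partial_{y} u$, $\partial_{x}^{\pm} v^{\pm}$, $\partial_{y} v^{\pm}$ of the Poisson extensions as an explicit Fourier integral and reads the relations off. For the centered relations your proof is correct and complete: $\mathcal{H} A = \partial_{x}^{0}$ yields $\partial_{y} \mathcal{H} f = - \mathcal{H} A f = - \partial_{x}^{0} f$, and $\partial_{x}^{0} \mathcal{H} = \mathcal{H}^{2} A = - \mathcal{H}^{2} \partial_{y}$ combined with (\ref{eq: H*H}) gives the second identity. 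Identifying the smoothing factor as exactly $- \mathcal{H}^{2}$, in parallel with $\mathcal{H}_{\mathbbm{R}}^{2} = - \tmop{Id}$, is a genuinely nice point that the paper leaves implicit.

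The gap is in the one-sided relations, and it is not the bookkeeping you defer ``to the end.'' Your chain $\partial_{y} \mathcal{H}^{\pm} f = -A \mathcal{H}^{\pm} f = - \mathcal{H}^{\pm} A f = - \partial_{x}^{\pm} f$ is forced by the definition $\mathcal{H}^{\pm} A = \partial_{x}^{\pm}$, yet the theorem asserts $- \partial_{x}^{\mp} f$; likewise your $\partial_{x}^{\pm} \mathcal{H}^{\pm} f = ( \mathcal{H}^{\pm} )^{2} A f = S_{\pm 1} \partial_{y} f$ carries a unit shift absent from the stated $\partial_{x}^{\pm} \mathcal{H}^{\pm} f = \partial_{y} f$. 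Since $\partial_{x}^{+} \neq \partial_{x}^{-}$ and $S_{\pm 1} \partial_{y} \neq \partial_{y}$, what you derive and what is stated are different operators, not different labels, so no after-the-fact relabelling can close the gap: either the statement or one of the identities you invoke must be adjusted. The tension is a convention clash in the paper itself: with the symbol $\mathi e^{\pm \mathi \pi \xi} \sin ( \pi \xi ) / | \sin ( \pi \xi ) |$ of Proposition \ref{P: equivalent definitions} and the inverse transform written against $e^{+ 2 \pi \mathi x \xi}$, your uncrossed relations are the correct ones, while the paper's proof silently writes Poisson extensions against $e^{-2 \pi \mathi \xi x}$, which interchanges the roles of the phases $e^{\pm \mathi \pi \xi}$ and produces the crossed form of the statement. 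You should either redo the computation in the paper's convention or record explicitly that, under your reading of the definitions, the one-sided relations hold with $\pm$ in place of $\mp$ and with the extra shift. Either way the averaged relations --- the only ones used in the rest of the paper --- are unaffected, since $\partial_{x}^{0}$ and $\frac{1}{4} S_{-} + \frac{1}{2} \tmop{Id} + \frac{1}{4} S_{+}$ are symmetric under the interchange of $+$ and $-$.
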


\begin{theorem}
  \label{T: weak formulation}\tmtextbf{(Weak formulation for the discrete
  Hilbert)} Let $\mathcal{H} f$ denote the centered discrete Hilbert transform
  of $f$. Let $f \assign f ( y,x )$ denote the Poisson extension of $f$. Let
  $g \assign g ( y,x )$ denote the Poisson extension of a test function $g$.
  We have the weak formulation:
  \begin{eqnarray*}
    ( \mathcal{H} f,g ) & = & \int_{0}^{\infty} ( A \nabla_{y,x} f,
    \nabla_{y,x} g )  y  \mathd y \label{eq: weak formulation Hilbert
    transform}
  \end{eqnarray*}
  where $A \in \mathbbm{R}^{4 \times 4}$ is the matrix
  \[ A= \left(\begin{array}{cccc}
       0  & 0 & 0 & -1\\
       0 & 0 & 1 & 0\\
       0 & -1 & 0 & 0\\
       1 & 0 & 0 & 0
     \end{array}\right) . \]
\end{theorem}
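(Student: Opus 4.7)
My strategy is to use $\mathcal{H} = \tfrac{1}{2}(\mathcal{H}^{+} + \mathcal{H}^{-})$, derive a weak formulation for each $\mathcal{H}^{\pm}$ separately, and then average. For a fixed sign I apply the elementary identity $F(0) = \int_{0}^{\infty} F''(y)\,y\,\mathd y$ (already used above in the Littlewood--Paley identity) to
\[ F(y) \assign ( \mathcal{H}^{\pm} f(y,\cdot), g(y,\cdot) )_{\ell^{2}(\mathbbm{Z})}, \]
whose decay at infinity follows from the $\ell^{2}$-boundedness of $\mathcal{H}^{\pm}$ (Proposition \ref{P: equivalent definitions}) together with the contractivity of the Poisson semigroup $P_{y}$.

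Expanding $F''(y)$ by Leibniz yields three terms. For the two outer terms I invoke harmonicity in the upper half-space ($\partial_{y}^{2} = -\Delta_{x}$) and the integration-by-parts identity $(-\Delta_{x} u, v) = (\partial_{x}^{\pm} u, \partial_{x}^{\pm} v)$, choosing the sign that matches $\mathcal{H}^{\pm}$. This is the crucial choice: it pairs with the Cauchy--Riemann relation $\partial_{x}^{\pm} \mathcal{H}^{\pm} f = \partial_{y} f$ from Theorem \ref{T: Cauchy-Riemann} and eliminates every occurrence of $\mathcal{H}^{\pm} f$ in favour of first-order derivatives of $f$. The middle term is treated with the other Cauchy--Riemann relation $\partial_{y} \mathcal{H}^{\pm} f = - \partial_{x}^{\mp} f$. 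After these substitutions I expect the clean formula
\[ ( \mathcal{H}^{\pm} f, g ) = 2 \int_{0}^{\infty} \bigl[ ( \partial_{y} f, \partial_{x}^{\pm} g ) - ( \partial_{x}^{\mp} f, \partial_{y} g ) \bigr] \, y \, \mathd y. \]

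Averaging the two signs produces four terms, and the remainder is a purely linear-algebraic verification. With $\nabla_{y,x} = (\partial_{y}, \partial_{x}^{+}, \partial_{y}, \partial_{x}^{-})$ and $A$ as given, a direct computation yields $A \nabla_{y,x} f = (-\partial_{x}^{-} f, \partial_{y} f, -\partial_{x}^{+} f, \partial_{y} f)$; pairing against $\nabla_{y,x} g$ and summing over $x \in \mathbbm{Z}$ reproduces exactly the averaged integrand. The doubling of the $\partial_{y}$ entry in $\nabla_{y,x}$ is precisely what allows \emph{both} $\partial_{x}^{+} f$ and $\partial_{x}^{-} f$ to contribute to the $\partial_{y} g$ coefficient, which explains why the weak formula fits a $4\times 4$ antisymmetric $A$ rather than a smaller matrix.

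The only real difficulty is sign bookkeeping: at each step one must choose the version of the $\pm$-integration-by-parts that matches the sign of the Hilbert transform being manipulated, so that the Cauchy--Riemann substitution applies cleanly. Once this discipline is maintained, no $\mathcal{H}^{\pm} f$ survives in the integrand and the identification with the matrix $A$ is immediate.
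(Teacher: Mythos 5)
Your proposal is correct and follows essentially the same route as the paper: the paper likewise applies the Littlewood--Paley identity $F(0)=\int_0^\infty F''(y)\,y\,\mathd y$ to $(\mathcal{H}^{\pm}f(y,\cdot),g(y,\cdot))$, splits the resulting integrand as $2(\partial_y\cdot,\partial_y\cdot)+2(\partial_x^{\pm}\cdot,\partial_x^{\pm}\cdot)$, substitutes the Cauchy--Riemann relations $\partial_x^{\pm}\mathcal{H}^{\pm}f=\partial_y f$ and $\partial_y\mathcal{H}^{\pm}f=-\partial_x^{\mp}f$ to reach exactly your formula $(\mathcal{H}^{\pm}f,g)=2\int_0^\infty[(\partial_y f,\partial_x^{\pm}g)-(\partial_x^{\mp}f,\partial_y g)]\,y\,\mathd y$, and then averages and identifies the matrix $A$. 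The only difference is that the paper derives the Cauchy--Riemann relations inline via explicit Fourier computations, whereas you cite them from Theorem \ref{T: Cauchy-Riemann}, which is legitimate since they are stated there separately.
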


It is important to observe that the weak formulation involves the orthogonal
matrix $A$ such that $A^{2} =- \tmop{Id}$. This does not mean that the
centered discrete Hilbert transform $\mathcal{H}$ is an antiinvolution, as is
clear from Theorem \ref{T: Cauchy-Riemann} and equation (\ref{eq: H*H}).

\begin{proof}
  The semidiscrete Poisson extension convenient for us has up to normalization
  the kernel
  \[ P_{y} ( x ) = \int_{0}^{1} e^{-2 \pi \mathi \xi x} e_{}^{-2 | \sin ( \pi
     \xi ) | y} d \xi \]
  Let us write for convenience $u$ the Poisson extension of $f$
  \[ u ( x_{} ,y ) = \int_{0}^{1} \hat{f} ( \xi ) e^{-2 \pi \mathi \xi x}
     e_{}^{-2 | \sin ( \pi \xi ) | y} d \xi \]
  and $v^{\pm}$ the Poisson extension of $\mathcal{H}^{\pm} f$
  \begin{eqnarray*}
    v^{\pm} ( x,y ) & = & \int_{0}^{1} \widehat{\mathcal{H}_{\pm} f} ( \xi )
    e^{-2 \pi \mathi \xi x} e_{}^{-2 | \sin ( \pi \xi ) | y} d \xi\\
    & = & - \mathi \int_{0}^{1} e^{\noplus \pm \pi \mathi \xi} \frac{\sin (
    \pi \xi )}{| \sin ( \pi \xi ) |} \hat{f} ( \xi ) e^{-2 \pi \mathi \xi x}
    e_{}^{-2 | \sin ( \pi \xi ) | y} d \xi
  \end{eqnarray*}
  Observe that
  \begin{eqnarray*}
    \partial^{+}_{x} u ( x,y ) & = & \int_{0}^{1} \hat{f} ( \xi ) ( e^{-2 \pi
    \mathi \xi ( x+1 )} -e^{-2 \pi \mathi \xi x} ) e_{}^{-2 | \sin ( \pi \xi )
    | y} d \xi\\
    & = & -2 \mathi \int_{0}^{1} e^{- \pi \mathi \xi} \sin ( \pi \xi )
    \hat{f} ( \xi ) e^{-2 \pi \mathi \xi x } e_{}^{-2 | \sin ( \pi \xi ) | y}
    d \xi
  \end{eqnarray*}
  While
  \begin{eqnarray*}
    \partial_{x}^{-} u ( x,y ) & = & \int_{0}^{1} \hat{f} ( \xi ) ( e^{-2 \pi
    \mathi \xi x} -e^{-2 \pi \mathi \xi ( x-1 )} ) e_{}^{-2 | \sin ( \pi \xi )
    | y} d \xi\\
    &  & \\
    & = & -2 \mathi \int_{0}^{1} e^{\pi \mathi \xi} \sin ( \pi \xi ) \hat{f}
    ( \xi ) e^{-2 \pi \mathi \xi x } e_{}^{-2 | \sin ( \pi \xi ) | y} d \xi
  \end{eqnarray*}
  Similarly
  \[ \partial_{x}^{+} v^{+} ( x,y ) =-2 \int_{0}^{1} | \sin ( \pi \xi ) |
     \hat{f} ( \xi ) e^{-2 \pi \mathi \xi x} e_{}^{-2 | \sin ( \pi \xi ) | y}
     d \xi \]
  and

  \[ \partial_{x}^{-} v^{-} ( x,y ) =-2 \int_{0}^{1} | \sin ( \pi \xi ) |
     \hat{f} ( \xi ) e^{-2 \pi \mathi \xi x} e_{}^{-2 | \sin ( \pi \xi ) | y}
     d \xi \]
  The continuous derivatives in the other variable are
  \[ \partial_{y} u ( x,y ) =-2 \int_{0}^{1} | \nobracket \sin ( \pi \xi ) |
     \nobracket \hat{f} ( \xi ) e^{-2 \pi \mathi \xi x} e_{}^{-2 | \sin ( \pi
     \xi ) | y} d \xi \]
  and
  \[ \partial_{y} v^{\pm} ( x,y ) =2 \mathi \int_{0}^{1} e^{\pm \pi \mathi
     \xi} \sin ( \pi \xi ) \hat{f} ( \xi ) e^{-2 \pi \mathi \xi x} e_{}^{-2 |
     \sin ( \pi \xi ) | y} d \xi    \]
  This gives the following Cauchy Riemann equations
  \[ \partial_{y} v^{+} =- \partial^{-}_{x} u \nocomma , \]
  \[ \partial_{y} v^{-} =- \partial_{x}^{+} u \]
  but
  \[ \partial_{y} u= \partial_{x}^{+} v^{+} = \partial_{x}^{-} v^{-} . \]
  Let us note $u^{g}$ the Poisson extension of a test funciton $g$ and
  accordingly $u=u^{f}$ the Poisson extension of $f$. We have successively
  \begin{eqnarray*}
    ( \mathcal{H}^{+} f,g ) & = & 2 \int_{0}^{\infty} ( \partial_{y}  v^{+} (
    y ) , \partial_{y}  u^{g} ( y ) ) y  \mathd y+2 \int_{0}^{\infty} (
    \partial_{x}^{+} v^{+} ( y ) , \partial_{x}^{+}  u^{g} ( y ) ) y  \mathd
    y\\
    & = & 2 \int_{0}^{\infty} ( - \partial_{x}^{-} u^{f} ( y ) , \partial_{y}
    u^{g} ( y ) ) y  \mathd y+2 \int_{0}^{\infty} ( \partial_{y} u^{f} ( y )
    , \partial_{x}^{+}  u^{g} ( y ) ) y  \mathd y\\
    ( \mathcal{H}^{-} f,g ) & = & 2 \int_{0}^{\infty} ( - \partial_{x}^{+}
    u^{f} ( y ) , \partial_{y}  u^{g} ( y ) ) y  \mathd y+2 \int_{0}^{\infty}
    ( \partial_{y} u^{f} ( y ) , \partial_{x}^{-}  u^{g} ( y ) ) y  \mathd y\\
    ( \mathcal{H} f,g ) & = & \int_{0}^{\infty} ( - \partial_{x}^{-} u^{f} ( y
    ) , \partial_{y}  u^{g} ( y ) ) y  \mathd y+ \int_{0}^{\infty} (
    \partial_{y} u^{f} ( y ) , \partial_{x}^{+}  u^{g} ( y ) ) y  \mathd y\\
    &  & \hspace{1em} + \int_{0}^{\infty} ( - \partial_{x}^{+} u^{f} ( y ) ,
    \partial_{y}  u^{g} ( y ) ) y  \mathd y+ \int_{0}^{\infty} ( \partial_{y}
    u^{f} ( y ) , \partial_{x}^{-}  u^{g} ( y ) ) y  \mathd y\\
    & = & \int_{0}^{\infty} \left( \left(\begin{array}{r}
      - \partial^{-}_{x}\\
      \partial_{y}\\
      - \partial^{+}_{x}\\
      \partial_{y}
    \end{array}\right) u^{f} ( y ) , \left(\begin{array}{l}
      \partial_{y}\\
      \partial^{+}_{x}\\
      \partial_{y}\\
      \partial_{x}^{-}
    \end{array}\right) u^{g} ( y ) \right) y  \mathd y\\
    & = & \int_{0}^{\infty} \left( \left(\begin{array}{cccc}
      0  & 0 & 0 & -1\\
      0 & 0 & 1 & 0\\
      0 & -1 & 0 & 0\\
      1 & 0 & 0 & 0
    \end{array}\right) \left(\begin{array}{l}
      \partial_{y}\\
      \partial^{+}_{x}\\
      \partial_{y}\\
      \partial_{x}^{-}
    \end{array}\right) u^{f} ( y ) , \left(\begin{array}{l}
      \partial_{y}\\
      \partial^{+}_{x}\\
      \partial_{y}\\
      \partial_{x}^{-}
    \end{array}\right) u^{g} ( y ) \right) y  \mathd y
  \end{eqnarray*}
  This concludes the proof of Theorems \ref{T: Cauchy-Riemann} and \ref{T:
  weak formulation}.
\end{proof}

{\bigskip}

\section{Stochastic representations}\label{S: stochastic representation}

{\bigskip}

\subsection*{Semidiscrete random walks}Let $\mathcal{N}_{t}$ be a
c{\`a}dl{\`a}g Poisson process with parameter $\lambda$. Let $( T_{k} )_{k \in
\mathbbm{N}}$ be the instants of jumps. Let $( \varepsilon_{k} )_{k \in
\mathbbm{N}}$ be a sequence of independent Bernouilli variables,
\[ \forall k, \hspace{1em} \mathbbm{P} ( \varepsilon_{k} =1 ) =\mathbbm{P} (
   \varepsilon_{k} =-1 ) =1/2. \]
This allows us to define the random walk $\mathcal{X}_{t} \in \mathbbm{Z}$ as
the compound Poisson process (see e.g. {\tmname{Protter}} {\cite{Pro2005a}},
{\tmname{Privault}} {\cite{Pri2009a}}{\cite{Pri2014a}})
\[ \mathcal{X}_{0} \mathcal{\mathcal{}} \in \mathbbm{Z}, \hspace{1em}
   \mathcal{X}_{t} = \sum_{k=1}^{\mathcal{N}_{t}} \varepsilon_{k} \]
Let $\mathcal{Y}_{t}$ be a standard onedimensional brownian process started at
$\mathcal{Y}_{0}$. We define the semidiscrete random walk $\mathcal{Z}_{t}$ as
$\mathcal{Z}_{t} \assign ( \mathcal{Y}_{t} , \mathcal{X}_{t} ) \in
\mathbbm{R}^{+} \times \mathbbm{Z}$, i.e.
\[ \mathcal{Z_{0} = ( \mathcal{Y}_{0} , \mathcal{X}_{0} )} , \hspace{1em}
   \mathd \mathcal{Z}_{t} = ( \mathd B_{t} , \varepsilon_{\mathcal{N}_{t}}  
   \mathd \mathcal{N}_{t} ) \]

\subsection*{Stochastic integrals}Let $f$ defined on $\mathbbm{R}^{+} \times
\mathbbm{Z}$ be a smooth function. We have the It{\^o} formula
\begin{eqnarray*}
  f ( \mathcal{Z}_{t} ) -f ( \mathcal{Z}_{0} ) & = & \int_{0}^{t}
  \partial_{x}^{0} f ( \mathcal{Z}_{s_{-}} ) \mathd \mathcal{X}_{s} +
  \int_{0}^{t} \frac{1}{2} \partial_{x}^{2} f ( \mathcal{Z}_{s_{-}} ) \mathd (
  \mathcal{N}_{s} -s )\\
  &  & \hspace{2em} + \int_{0}^{t} \partial_{y} f ( \mathcal{Z}_{s_{-}} )
  \mathd \mathcal{Y}_{s} + \left\{ \frac{1}{2} \int_{0}^{t} ( \partial^{2}_{x}
  f+ \partial^{2}_{y} f ) ( \mathcal{Z}_{s_{-}} ) \mathd s \right\}
\end{eqnarray*}
This formula can be derived thanks to telescopic sums involving jump times. We
refer to {\tmname{Privault}} {\cite{Pri2009a}}{\cite{Pri2014a}} for more
details.

\subsection*{Quadratic covariation} Let $f$ and $g$ be two semidiscrete harmonic
functions in $\mathbbm{R}^{+}_{\star} \times \mathbbm{Z}$, that is
\[ \partial^{2}_{x} f+ \partial^{2}_{y} f= \partial^{2}_{x} g+
   \partial^{2}_{y} g=0 \hspace{1em} \tmop{in} \hspace{1em}
   \mathbbm{R}^{+}_{\star} \times \mathbbm{Z}. \]
We define the corresponding martingales $M_{t}^{f} \assign f ( \mathcal{Z}_{t}
)$ and $M_{t}^{g} \assign g ( \mathcal{Z}_{t} )$, so that
\begin{eqnarray*}
  \mathd M_{t}^{f} & = & \partial_{x}^{0} f ( \mathcal{Z}_{t_{-}} )   \mathd
  \mathcal{X}_{t} + \frac{1}{2} \partial_{x}^{2} f ( \mathcal{Z}_{t_{-}} )  
  \mathd ( \mathcal{N}_{t} -t ) + \partial_{y} f ( \mathcal{Z}_{t_{-}} )  
  \mathd \mathcal{Y}_{t}\\
  \mathd M_{t}^{g} & = & \partial_{x}^{0} g ( \mathcal{Z}_{t_{-}} )   \mathd
  \mathcal{X}_{t} + \frac{1}{2} \partial_{x}^{2} g ( \mathcal{Z}_{t_{-}} )  
  \mathd ( \mathcal{N}_{t} -t ) + \partial_{y} g ( \mathcal{Z}_{t_{-}} )  
  \mathd \mathcal{Y}_{t}
\end{eqnarray*}
It follows that
\begin{eqnarray*}
  \mathd [ M^{f} ,M^{g} ]_{t} & = & \partial_{x}^{0} f ( \mathcal{Z}_{t_{-}} )
  \partial_{x}^{0} g ( \mathcal{Z}_{t_{-}} ) \mathd [ \mathcal{X} ,
  \mathcal{X} ]_{t} + \frac{1}{4} \partial_{x}^{2} f ( \mathcal{Z}_{t_{-}} )  
  \partial_{x}^{2} g ( \mathcal{Z}_{t_{-}} ) \mathd [ \mathcal{N, \mathcal{N}}
  ]_{t}\\
  &  & \hspace{2em} + \left( \partial_{x}^{0} f ( \mathcal{Z}_{t_{-}} )
  \frac{1}{2} \partial_{x}^{2} g ( \mathcal{Z}_{t_{-}} ) + \partial_{x}^{0} g
  ( \mathcal{Z}_{t_{-}} ) \frac{1}{2} \partial_{x}^{2} f ( \mathcal{Z}_{t_{-}}
  ) \right) \mathd [ \mathcal{X} , \mathcal{N} ]_{t}\\
  &  & \hspace{2em} + \partial_{y} f ( \mathcal{Z}_{t_{-}} ) \partial_{y} g (
  \mathcal{Z}_{t_{-}} ) \mathd [ \mathcal{Y, \mathcal{Y}} ]_{t}\\
  & = & \left[ \partial_{x}^{0} f ( \mathcal{Z}_{t_{-}} ) \partial_{x}^{0} g
  ( \mathcal{Z}_{t_{-}} ) + \frac{1}{2} \partial_{x}^{2} f (
  \mathcal{Z}_{t_{-}} ) \frac{1}{2} \partial_{x}^{2} g ( \mathcal{Z}_{t_{-}} )
  \right] \mathd \mathcal{N}_{t}\\
  &  & \hspace{2em} + \left[ \partial_{x}^{0} f ( \mathcal{Z}_{t_{-}} )
  \frac{1}{2} \partial_{x}^{2} g ( \mathcal{Z}_{t_{-}} ) + \partial_{x}^{0} g
  ( \mathcal{Z}_{t_{-}} ) \frac{1}{2} \partial_{x}^{2} f ( \mathcal{Z}_{t_{-}}
  ) \right] \mathd \mathcal{X}_{t}\\
  &  & \hspace{2em} + \partial_{y} f ( \mathcal{Z}_{t_{-}} ) \partial_{y} g (
  \mathcal{Z}_{t_{-}} ) \mathd t\\
  & = & \frac{1}{2} [ \partial_{x}^{+} f  \partial_{x}^{+} g+
  \partial_{x}^{-} f  \partial_{x}^{-} g ] \mathd \mathcal{N}_{t}\\
  &  & \hspace{2em} + \frac{1}{2} [ \partial_{x}^{+} f  \partial_{x}^{+} g-
  \partial_{x}^{-} f  \partial_{x}^{-} g ] \mathd \mathcal{X}_{t}\\
  &  & \hspace{2em} + \partial_{y} f ( \mathcal{Z}_{t_{-}} ) \partial_{y} g (
  \mathcal{Z}_{t_{-}} ) \mathd t\\
  & = & [ ( \Delta \mathcal{X}_{t} )_{+}   \partial_{x}^{+} f 
  \partial_{x}^{+} g+ ( \Delta \mathcal{X}_{t} )_{-}   \partial_{x}^{-} f 
  \partial_{x}^{-} g ] \mathd \mathcal{N}_{t}\\
  &  & \hspace{2em} + \partial_{y} f ( \mathcal{Z}_{t_{-}} )   \partial_{y} g
  ( \mathcal{Z}_{t_{-}} ) \mathd t
\end{eqnarray*}
where $( \Delta \mathcal{X} )_{\pm} \assign \max ( 0, \pm \Delta \mathcal{X}
)$.

\subsection*{Martingale transform}In order to define a martingale transform
$N_{t}^{f}$ of $M_{t}^{f}$ such that $N_{t}^{f}$ allows us to recover the
discrete hilbert transform, we recall that the weak formulation (\ref{eq: weak
formulation Hilbert transform}) of Theorem \ref{T: weak formulation} involves
the ``martingale transform''
\begin{eqnarray*}
  \nabla_{y,x} = \left(\begin{array}{c}
    \partial_{y}^{-}\\
    \partial_{x}^{+}\\
    \partial_{y}^{+}\\
    \partial_{x}^{-}
  \end{array}\right) & \longrightarrow & \nabla_{y,x}^{\perp} =
  \left(\begin{array}{r}
    - \partial_{x}^{-}\\
    \partial_{y}^{+}\\
    - \partial_{x}^{+}\\
    \partial_{y}^{-}
  \end{array}\right) .
\end{eqnarray*}
Let us first rewrite the martingale increments in terms of the
$\partial_{x}^{\pm}$ derivatives:
\begin{eqnarray*}
  \mathd M_{t}^{f} & = & \partial_{x}^{0} f ( \mathcal{Z}_{t_{-}} )   \mathd
  \mathcal{X}_{t} + \frac{1}{2} \partial_{x}^{2} f ( \mathcal{Z}_{t_{-}} )  
  \mathd ( \mathcal{N}_{t} -t ) + \partial_{y} f ( \mathcal{Z}_{t_{-}} )  
  \mathd \mathcal{Y}_{t}\\
  & = & \frac{1}{2} \partial_{x}^{+} f ( \mathcal{Z}_{t_{-}} )   ( \mathd
  \mathcal{X}_{t} + \mathd ( \mathcal{N}_{t} -t ) ) + \frac{1}{2}
  \partial_{x}^{-} f ( \mathcal{Z}_{t_{-}} )   ( \mathd \mathcal{X}_{t} -
  \mathd ( \mathcal{N}_{t} -t ) )\\
  &  & \hspace{2em} + \frac{1}{2} \partial^{-}_{y} f ( \mathcal{Z}_{t_{-}} ) 
  \mathd \mathcal{Y}_{t} + \frac{1}{2} \partial^{+}_{y} f (
  \mathcal{Z}_{t_{-}} )   \mathd \mathcal{Y}_{t} .
\end{eqnarray*}
The Cauchy-Riemann relations therefore suggest to define
\begin{eqnarray*}
  \mathd N_{t}^{f} & \assign & \frac{1}{2} \partial_{y}^{+} f (
  \mathcal{Z}_{t_{-}} )   ( \mathd \mathcal{X}_{t} + \mathd ( \mathcal{N}_{t}
  -t ) ) + \frac{1}{2} \partial_{y}^{-} f ( \mathcal{Z}_{t_{-}} )   ( \mathd
  \mathcal{X}_{t} - \mathd ( \mathcal{N}_{t} -t ) )\\
  &  & \hspace{2em} + \frac{1}{2} ( - \partial^{-}_{x} f ) (
  \mathcal{Z}_{t_{-}} )   \mathd \mathcal{Y}_{t} + \frac{1}{2} ( -
  \partial^{+}_{x} f ) ( \mathcal{Z}_{t_{-}} )   \mathd \mathcal{Y}_{t}\\
  & = & \partial_{y} f ( \mathcal{Z}_{t_{-}} )   \mathd \mathcal{X}_{t} - (
  \partial^{0}_{x} f ) ( \mathcal{Z}_{t_{-}} )   \mathd \mathcal{Y}_{t}
\end{eqnarray*}
To summarize, let $f$ be harmonic. We have defined $N_{t}^{f}$ as the
stochastic integral
\[ N_{t}^{f} \assign f ( \mathcal{Z}_{0} ) + \int_{0}^{t} \partial_{y} f (
   \mathcal{Z}_{s_{-}} )   \mathd \mathcal{X}_{s} - \partial^{0}_{x} f (
   \mathcal{Z}_{s_{-}} )   \mathd \mathcal{Y}_{s} . \]
It is now easy to estimate the quadratic covariation
\begin{eqnarray*}
  \mathd [ M^{f} ,N^{f} ]_{t} & = & \partial_{x}^{0} f ( \mathcal{Z}_{t_{-}} )
  \partial_{y} f ( \mathcal{Z}_{t_{-}} )   \mathd [ \mathcal{X} ,
  \mathcal{X} ]_{t} - \partial_{y} f ( \mathcal{Z}_{t_{-}} )  
  \partial_{x}^{0} f ( \mathcal{Z}_{t_{-}} ) \mathd [ \mathcal{Y} ,
  \mathcal{Y} ]_{t}\\
  & = & \partial_{x}^{0} f ( \mathcal{Z}_{t_{-}} )   \partial_{y} f (
  \mathcal{Z}_{t_{-}} )   \mathd \mathcal{N}^{0}_{t} ,
\end{eqnarray*}
where we note $\mathcal{N}^{0}_{t} \assign \mathcal{N}_{t} -t$ the compensated
Poisson process, that is a martingale process.

Notice that $M_{t}^{f}$ and $N_{t}^{f}$ are not orthogonal martingales with
respect to the bracket multiplication $[ \cdot , \cdot ]$. However, recall
that the angular bracket $\langle \cdot , \cdot \rangle$, also known as the
conditional quadratic covariation (see {\tmname{Protter}} {\cite{Pro2005a}}),
is the compensator of $[ \cdot , \cdot ]$. But since $\mathcal{N}^{0}_{t}$ is
a martingale, we have for the angular bracket
\[ \mathd \langle M^{f} ,N^{f} \rangle_{t} =0, \]
that is the martingales $M_{t}^{f}$ and $N_{t}^{f}$ are orthogonal with
respect to the conditional quadratic covariation.

Similarly, the pairing of $N_{t}^{f}$ with a test martingale $M_{t}^{g}$ leads
to the quadratic covariation
\begin{eqnarray*}
  \mathd [ N^{f} ,M^{g} ]_{t} & = & \partial_{y} f ( \mathcal{Z}_{t_{-}} )
  \left[ \partial_{x}^{0} g ( \mathcal{Z}_{t_{-}} ) + ( \Delta_{t} \mathcal{X}
  )   \frac{1}{2} \partial_{x}^{2} g ( \mathcal{Z}_{t_{-}} ) \right] \mathd
  \mathcal{N}_{t}\\
  &  & \hspace{2em} - \partial^{0}_{x} f ( \mathcal{Z}_{t_{-}} )  
  \partial_{y} g ( \mathcal{Z}_{t_{-}} )   \mathd t\\
  & = & \partial_{y} f ( \mathcal{Z}_{t_{-}} ) [ ( \Delta_{t} \mathcal{X}
  )_{+}   ( \partial_{x}^{-} g ) ( \mathcal{Z}_{t_{-}} ) + ( \Delta_{t}
  \mathcal{X} )_{-}   ( \partial_{x}^{-} g ) ( \mathcal{Z}_{t_{-}} ) ] \mathd
  \mathcal{N}_{t}\\
  &  & \hspace{2em} - \partial^{0}_{x} f ( \mathcal{Z}_{t_{-}} )  
  \partial_{y} g ( \mathcal{Z}_{t_{-}} )   \mathd t,
\end{eqnarray*}
and the conditional quadratic covariation
\begin{eqnarray*}
  \mathd \langle N^{f} ,M^{g} \rangle_{t} & = & \partial_{y} f (
  \mathcal{Z}_{t_{-}} ) \left[ ( \partial_{x}^{0} g ) ( \mathcal{Z}_{t_{-}} )
  + ( \Delta_{t} \mathcal{X} )   \frac{1}{2} ( \partial_{x}^{2} g ) (
  \mathcal{Z}_{t_{-}} ) \right] \mathd \mathcal{N}_{t}\\
  &  & \hspace{2em} - \partial^{0}_{x} f ( \mathcal{Z}_{t_{-}} )  
  \partial_{y} g ( \mathcal{Z}_{t_{-}} )   \mathd t\\
  & = & \partial_{y} f ( \mathcal{Z}_{t_{-}} ) [ ( \Delta_{t} \mathcal{X}
  )_{+} ( \partial_{x}^{-} g ) ( \mathcal{Z}_{t_{-}} ) + ( \Delta_{t}
  \mathcal{X} )_{-}   ( \partial_{x}^{-} g ) ( \mathcal{Z}_{t_{-}} ) ] \mathd
  \mathcal{N}_{t}\\
  &  & \hspace{2em} - \partial^{0}_{x} f ( \mathcal{Z}_{t_{-}} )  
  \partial_{y} g ( \mathcal{Z}_{t_{-}} )   \mathd t\\
  & = & \partial_{y} f ( \mathcal{Z}_{t_{-}} ) \left[ \frac{1}{2}
  \partial_{x}^{-} g ( \mathcal{Z}_{t_{-}} ) + \frac{1}{2}   \partial_{x}^{-}
  g ( \mathcal{Z}_{t_{-}} ) \right] \mathd t\\
  &  & \hspace{2em} - \partial^{0}_{x} f ( \mathcal{Z}_{t_{-}} )  
  \partial_{y} g ( \mathcal{Z}_{t_{-}} )   \mathd t\\
  & = & \{ \partial_{y} f ( \mathcal{Z}_{t_{-}} )   \partial^{0}_{x} f (
  \mathcal{Z}_{t_{-}} ) - \partial^{0}_{x} f ( \mathcal{Z}_{t_{-}} )  
  \partial_{y} g ( \mathcal{Z}_{t_{-}} ) \}   \mathd t.
\end{eqnarray*}

{\bigskip}

\section{Proof of Theorem \ref{T: stochastic representation}}\label{S: proof
of the main result}

{\bigskip}

It remains to prove the representation formula stated in Theorem \ref{T:
stochastic representation}. Equipped with the martingale representations of
the previous sections, it suffices to follow the lines of
{\tmname{Gundy}}--{\tmname{Varopoulos}} {\cite{GunVar1979}} and
{\tmname{Arcozzi}} {\cite{Arc1995a}}. For that, let $( \mathcal{Z}_{t} )_{-
\infty <t \leqslant 0}$ be the so--called background noise. Those are
semidiscrete random walks starting at infinity in the upper--half space
$\mathbbm{Z} \times \mathbbm{R}^{+}$ and stopped at time $t=0$ when reaching
the boundary $\mathbbm{Z}$. Let $f$ defined on $\mathbbm{Z}$, $M_{t}^{f}
\assign ( P_{t} f ) ( \mathcal{Z}_{t} )$ the associated martingale, and
$N_{t}^{f}$ the corresponding martingale transform as defined previously.
Finally, introduce the projection operator
\[ \mathcal{T} f ( x ) \assign \mathbbm{E} ( N_{0}^{f} | \mathcal{Z}_{0} =x )
   . \]
It follows that for any test function $g$ defined on $\mathbbm{Z}$, and the
associated martingale $M_{t}^{g}$, we have
\begin{eqnarray*}
  ( \mathcal{T} f,g )_{\ell^{2} ( \mathbbm{Z} )} & = & \sum_{x} \mathcal{T} f ( x
  ) g ( x ) = \sum_{x} \mathbbm{E} ( N_{0}^{f} | \mathcal{Z}_{0} =x ) g ( x )
  = \sum_{x} \mathbbm{E} ( N_{0}^{f} | \mathcal{Z}_{0} =x ) M_{0}^{g}\\
  & = & \sum_{x} \mathbbm{E} ( N_{0}^{f} M_{0}^{g} | \mathcal{Z}_{0} =x ) =
  \sum_{x} \mathbbm{E} \left( \int_{- \infty}^{0} \mathd [ N^{f} ,M^{g} ]_{t} 
  \mid   \mathcal{Z}_{0} =x \right)\\
  & = & \sum_{x} \mathbbm{E} \left( \int_{- \infty}^{0} \{ \partial_{y} f (
  \mathcal{Z}_{t_{-}} )   \partial^{0}_{x} f ( \mathcal{Z}_{t_{-}} ) -
  \partial^{0}_{x} f ( \mathcal{Z}_{t_{-}} )   \partial_{y} g (
  \mathcal{Z}_{t_{-}} ) \} \mathd t  \mid   \mathcal{Z}_{0} =x \right)\\
  & = & \sum_{x} \int_{- \infty}^{0} \{ \partial_{y} f ( y,x )  
  \partial^{0}_{x} f ( y,x ) - \partial^{0}_{x} f ( y,x )   \partial_{y} g (
  y,x ) \} 2y \mathd y\\
  & = & ( \mathcal{H} f,g )
\end{eqnarray*}
where we used the fact that $M_{0}^{g}$ depends only on $\mathcal{Z}_{0}$ but
not on the trajectory, where we used the formula of the previous section for
the quadratic covariations, and finally the fact that the density of the
background noise $\mathcal{Z}_{t}$ in the upper half space is the same as in
the continuous setting, equal to $2y \mathd y$ (see {\cite{GunVar1979}}).

\end{document}